\numberwithin{equation}{section}
\newtheorem{theorem}{Theorem}[section]
\newtheorem{lemma}[theorem]{Lemma}
\newtheorem{proposition}[theorem]{Proposition}
\newtheorem{corollary}[theorem]{Corollary}
\theoremstyle{remark}
\newtheorem{definition}[theorem]{Definition}
\crefname{eqn}{Equation}{Equations}
\crefname{assumption}{Assumption}{Assumptions}
\crefname{innercustomthm}{Condition}{Conditions}
\newcommand\cee{{\mathcal E}}
\newcommand\cff{{\mathcal F}}
\newcommand\cgg{{\mathcal G}}
\newcommand\cmm{{\mathcal M}}
\newcommand\cpp{{\mathcal P}}
\newcommand\mE{{\mathbb E}}
\newcommand\E{\mE}
\newcommand\PP{{\mathbb P}}
\newcommand{\R}{{\mathbb R}}
\newcommand{\Rd}{{\R^d}}
\newcommand{\tand}{\quad\text{and}\quad}
\newcommand{\1}{{\mathbf 1}}
\newcommand{\wei}[1]{\langle#1\rangle}
\newcommand{\norm}[1]{{\left\vert\kern-0.25ex\left\vert\kern-0.25ex\left\vert #1 
    \right\vert\kern-0.25ex\right\vert\kern-0.25ex\right\vert}}
\DeclareMathOperator*{\esssup}{ess\,sup}
\renewcommand{\le}{\leq}
\renewcommand{\ge}{\geq}
\newcommand{\bmo}{{\mathrm{BMO}}}
\newcommand{\vmo}{{\mathrm{VMO}}}
\newcommand{\var}{{\mathrm{var}}}
\newcommand{\khoa}[2][]
{\todo[color=SeaGreen,caption={}, #1]{#2}}
\begin{document}
	\title{Maximal inequalities and weighted BMO processes}
	
\author{Khoa L{\^e} \orcidlink{0000-0002-7654-7139}}
\address{School of Mathematics, University of Leeds, U.K.
}
\email{k.le@leeds.ac.uk}
	\begin{abstract}
		For a general adapted integrable right-continuous with left limits (RCLL) process $(X_t)_{t\ge0}$ taking values in a metric space $(\mathcal E,d)$, we show (among other things) that for every $m\in(1,\infty)$ and every $\tau>0$
		$$
			\kappa_1(m)\|\sup_{t\in[0,\tau]}\mathbb{E}(d(X_{t-},X_\tau)|\mathcal F_t)\|_m\le  \|\sup_{t\in[0,\tau]}d(X_0,X_t)\|_m\le \kappa_2(m) \|\sup_{t\in[0,\tau]}\mathbb{E}(d(X_{t-},X_\tau)|\mathcal F_t)\|_m
		$$ 
		with some universal constants $\kappa_1(m),\kappa_2(m)$ independent from $\tau$ such that $\kappa_1(m)=O(1)$ and $\kappa_2(m)=O(m)$ as $m\to\infty$. 
		This is a probabilistic version of Fefferman--Stein estimate for the sharp maximal functions.
		While the former inequality is derived easily from Doob's martingale inequality, the later inequality is a consequence of John--Nirenberg inequalities for weighted BMO processes, which are obtained in this note. We explain how John--Nirenberg inequalities can be utilized to obtain inequalities for martingales, both old and new alike in a unified way.
		\bigskip
		
		\noindent {{\sc Mathematics Subject Classification (2020):} 
		60G07  
		}

		\noindent{{\sc Keywords:} Maximal inequalities; BMO processes; John--Nirenberg inequality}
	\end{abstract}
	
	\maketitle
\section{Introduction and main results} 
\label{sec:introduction}
	Doob's maximal martingale inequality (\cite{MR0058896}) and Burkholder--Davis--Gundy inequality (\cite{MR0400380}) have ultimately become indispensable and in the development of probability theory. They are however only available within  martingales and their derivatives. 
	In most situations, it is possible to introduce some auxiliary martingales such that these inequalities can be applied. 
	A recent example is the stochastic sewing lemma from \cite{le2018stochastic} which has led to a wide range of new applications from regularization by noise \cite{athreya2020well}, stochastic numerics \cite{dareiotis2021quantifying,le2021taming,butkovsky2021approximation} to rough stochastic differential equations \cite{FHL21}. However, when applying to certain problems, one encounters many technical issues and some unexplanatory conditions. 
	This raises a question whether martingale method is  the correct toolbox for these problems, which we do not attempt to answer herein.
	Nevertheless, as an initiative step forward, we derive in this note  maximal inequalities which are valid for general adapted integrable  stochastic processes having RCLL (right-continuous with left limits) sample paths. 
	To state the results, we first fix some notation. Let $(\Omega,\cgg,\PP)$ be a probability pace equipped with a filtration $\{\cff_t\}_{t\ge0}$ satisfying the usual conditions. 
	For each stopping time $S$, $\E_S$ denotes the conditional expectation with respect to $\cff_S$ and for each $G\in \cgg$, $\PP_S(G):=\E_S(\1_G)$. For an integrable random variable $\Xi$, $\E_\cdot \Xi$ always denotes a RCLL version of $t\mapsto \E_t \Xi$.
	For each $m\in(0,\infty)$, we denote $\|\cdot\|_m=(\E|\cdot|^m)^{1/m}$ and $\|\cdot\|_\infty=\esssup_\omega|\cdot|$.
	We define the constants
	\begin{align}\label{def.M}
	 	(c_m)^m=m\left(1+\frac1m\right)^{(m+1)^2}
	 	\tand M=1+\sum_{m=1}^\infty \frac{(c_mm)^m }{m!}e^{-3m}.
	\end{align}
	Note that  $\sup_{m\ge1}c_m<\infty$ and $M$ is finite by the ratio test.
	\begin{theorem}\label{cor.maximaldagger}
		Let $(\cee,d)$ be a metric space and $X:[0,\infty)\times \Omega\to(\cee,d)$ be a RCLL adapted integrable process. We define for each $t\le \tau$,
		\begin{align}\label{def.dagger}
			X^\sharp_{t,\tau}=\sup_{s\in[t,\tau]}\E_sd(X_{s-}, X_\tau)
			\tand X^\sharp_\tau=X^\sharp_{0,\tau}.
		\end{align} 
		Then for every $\tau>0$ and $m\in(0,\infty)$, 
		\begin{gather}
			\label{est.upperbound}
			\|\sup_{t\in[0,\tau]}d(X_0,X_t)\|_m \le(2c_mm)
			\|\sup_{t\in[0,\tau]}\E_tX^\sharp_{t,\tau}\|_m,
			\\
			\sup_{\varepsilon>0}\left\|\sup_{t\in[0,\tau]}\frac{d(X_0,X_t)}{\varepsilon+\sup_{s\in[0,t]}\E_sX^\sharp_{s,\tau}}\right\|_m\le 2c_mm,
			\label{est.X2}
			\\ \sup_{\varepsilon>0}\E\exp\left(\lambda\sup_{t\in[0,\tau]}\frac{d(X_0,X_t)}{\varepsilon+\sup_{s\in[0,t]}\E_sX^\sharp_{s,\tau}}\right)\le M \quad\text{for every } \lambda\le (2e^3)^{-1}.
			\label{est.X3}
		\end{gather}
		In addition, 
		\begin{align}
			\frac{m-1}{2m-1}\|X^\sharp_\tau\|_m\le\|\sup_{t\in[0,\tau]}d(X_0,X_t)\|_m\le \frac{2c_mm^2}{m-1} 
			\|X^\sharp_\tau\|_m \quad \text{for every }m\in(1,\infty),
			\label{est.Smaximal}
		\end{align}
		and
		\begin{align}\label{est.Smax01}
			\|\sup_{t\in[0,\tau]}d(X_0,X_t)\|_m\le \frac{2c_mm}{(1-m)^{1/m}} 
			\|\E_0X^\sharp_\tau\|_m  \quad\text{for every }m\in(0,1).
		\end{align}
	\end{theorem}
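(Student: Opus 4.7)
The plan is to reduce the exponential and $L^m$ estimates \eqref{est.upperbound}--\eqref{est.X3} to the John--Nirenberg-type inequalities for weighted BMO processes established earlier in the paper, and then deduce \eqref{est.Smaximal} and \eqref{est.Smax01} by combining \eqref{est.upperbound} with Doob's maximal inequality in two different regimes of $m$.

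The first step is to recognise the process $R_t := d(X_0,X_t)$ as a weighted BMO process with control $X^\sharp_{\cdot,\tau}$. For any stopping time $S\le\tau$, the triangle inequality $|R_\tau - R_{S-}|\le d(X_{S-},X_\tau)$ followed by taking $\E_S$ gives
$$|R_{S-}-\E_S R_\tau|\le \E_S d(X_{S-},X_\tau)\le X^\sharp_{S,\tau}.$$
Together with $R_0=0$, this is exactly the hypothesis under which the paper's abstract weighted John--Nirenberg theorem applies; feeding $R$ and the weight $X^\sharp$ into it delivers \eqref{est.X3} directly, from which \eqref{est.X2} follows by the standard expansion $\exp(\lambda y)\ge(\lambda y)^m/m!$, and \eqref{est.upperbound} is the (sup-outside) $L^m$ form of the same weighted BMO conclusion.

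For the lower bound in \eqref{est.Smaximal}, the triangle inequality $d(X_{s-},X_\tau)\le d(X_0,X_{s-})+d(X_0,X_\tau)$ combined with $\E_s$ and the supremum in $s\in[0,\tau]$ gives
$$X^\sharp_\tau\le \sup_{t\in[0,\tau]}d(X_0,X_t)+\sup_{t\in[0,\tau]}\E_t d(X_0,X_\tau);$$
Doob's $L^m$ inequality applied to the RCLL martingale $\E_\cdot d(X_0,X_\tau)$ bounds the second term by $\tfrac{m}{m-1}\|d(X_0,X_\tau)\|_m\le\tfrac{m}{m-1}\|\sup_t d(X_0,X_t)\|_m$, and rearranging produces the claimed lower bound.

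For the upper bounds in \eqref{est.Smaximal} and \eqref{est.Smax01}, the monotonicity $X^\sharp_{t,\tau}\le X^\sharp_\tau$ gives $\sup_t \E_t X^\sharp_{t,\tau}\le \sup_t \E_t X^\sharp_\tau$. For $m>1$, Doob's $L^m$ inequality applied to the martingale $\E_\cdot X^\sharp_\tau$ yields $\|\sup_t \E_t X^\sharp_\tau\|_m\le \tfrac{m}{m-1}\|X^\sharp_\tau\|_m$, and combining with \eqref{est.upperbound} gives the upper bound in \eqref{est.Smaximal}. For $m\in(0,1)$ the $L^m$ Doob inequality is unavailable, so the plan is to use instead the weak maximal inequality for the positive martingale $\E_\cdot X^\sharp_\tau$ together with a conditional layer-cake integration, which gives $\|\sup_t \E_t X^\sharp_\tau\|_m\le(1-m)^{-1/m}\|\E_0 X^\sharp_\tau\|_m$; combined with \eqref{est.upperbound} this produces \eqref{est.Smax01}. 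The hard part is the first step above: matching the abstract John--Nirenberg theorem with the claimed quantitative constants, in particular the exponential constant $(2e^3)^{-1}$ and the linear-in-$m$ prefactor $2c_mm$; the remaining steps are routine manipulations with Doob and the triangle inequality.
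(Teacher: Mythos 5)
Your overall strategy --- feed $d(X_0,X_\cdot)$ into the weighted John--Nirenberg theorem (\cref{thm.bmophi}) and then run Doob's inequality in the two regimes of $m$ --- is the paper's strategy, and your treatment of the lower bound in \eqref{est.Smaximal}, of the upper bound for $m>1$, and of \eqref{est.Smax01} via the weak-type maximal inequality with a conditional layer-cake argument all match the paper's proof. The gap is in the step you yourself flag as the hard one. The hypothesis of \cref{thm.bmophi} is membership in $\bmo_\phi$, i.e.\ a bound on $\E_S|V_T-V_{S-}|$ uniformly over \emph{pairs} of stopping times $S\le T\le\tau$, with an \emph{adapted} weight $\phi$; your verification only controls the oscillation against the terminal value (the case $T=\tau$), in the weaker form $|R_{S-}-\E_SR_\tau|\le\E_S|R_\tau-R_{S-}|$, and against the non-adapted quantity $X^\sharp_{S,\tau}$. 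The stopping times produced inside the proof of \cref{thm.bmophi} are hitting times of levels of $|V_\cdot-V_r|$, not $\tau$, so the $T=\tau$ case does not suffice. The paper closes this by writing $|d(X_0,X_T)-d(X_0,X_{S-})|\le d(X_T,X_\tau)+d(X_{S-},X_\tau)$ and showing $\E_S d(X_T,X_\tau)\le\E_S X^\sharp_{S,\tau}$ via the tower property (with a limiting argument to replace $X_T$ by a left limit), which is exactly where the adapted weight $\phi_t=2\E_tX^\sharp_{t,\tau}+\varepsilon$ and the factor $2$ in all the constants come from.

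A second, quantitative problem: you propose to derive \eqref{est.X2} from \eqref{est.X3} via $e^{\lambda y}\ge(\lambda y)^m/m!$. That direction yields $\|y\|_m\le 2e^3(m!\,M)^{1/m}$, which for large $m$ behaves like $2e^2 m$ and exceeds the stated bound $2c_mm\approx 2em$, so \eqref{est.X2} as stated is not recovered. The paper's logic runs the other way: \eqref{est.X2} is read off directly from the normalized conditional moment estimate \eqref{est.jnweight2} (and \eqref{est.upperbound} from \eqref{ineq.JNweighted}), and \eqref{est.X3} is then obtained from \eqref{est.X2} by summing the Taylor series of the exponential, which is precisely what makes the threshold $\lambda\le(2e^3)^{-1}$ and the constant $M$ of \eqref{def.M} come out exactly.
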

	That $\sup_{t\in[0,\tau]}\E_tX^\sharp_{t,\tau}$ is a well-defined random variable needs some explanation. Observe that $t\mapsto \E_t X^\sharp_t$ is a supermartingale. In addition, $t\mapsto \E [\E_tX^\sharp_{t,\tau}]=\E\sup_{s\in[t,\tau]}\E_s d(X_{s-},X_\tau)$ is right-continuous by the Lebesgue monotone convergence theorem. Hence, be \cite[Chapter II (2.9)]{MR1725357}, the process $t\mapsto \E_t X^\sharp_t$ has a RCLL version for which we always use in \cref{cor.maximaldagger} and hereafter.
	This shows that $\sup_{t\in[0,\tau]}\E_tX^\sharp_{t,\tau}$ is measurable.

	When $X$ is a real valued discrete martingale, \eqref{est.Smaximal} can be traced back at least to Garsia \cite[Theorem III.5.2]{MR0448538} and Stroock \cite{MR341601}. 
	For general stochastic processes, \eqref{est.Smaximal} seems to be new and has no counterpart in literatures, as far as the author's knowledge.

	We note that the estimate $\|\sup_{t\in[0,\tau]}d(X_0,X_t)\|_m\le c(m)\|\sup_{t\in[0,\tau]}\E_td(X_{t},X_\tau)\|_m $ fails for $m>2$ even for discrete martingale. A counter example  given by Os\c{e}kowski in \cite{MR3426632} is the martingale $g=(0,g_1,g_1,g_1,\ldots)$ with $\PP(g_1=-\varepsilon)=1-\PP(g_1=\varepsilon^{-1})=(1+\varepsilon^2)^{-1}$.

	We will derive \cref{cor.maximaldagger} from John--Nirenberg inequalities for stochastic processes of bounded weighted mean oscillation (weighted BMO). 
	\begin{definition}\label{def.bmophi}
		Let $\tau>0$ be a fixed number,  $(\phi_t)_{t\in[0,\tau]}$ be a positive RCLL adapted  process and $(V_t)_{t\in[0,\tau]}$ be a real valued RCLL adapted process. $V$  belongs to $\bmo_\phi$ if \khoa{mind to remove $\tau$?}
		\begin{align*}
			[V]_{\bmo_\phi}:=  \sup_{0\le S\le T\le \tau}\|\phi^{-1}_S\E_S|V_T-V_{S-}|\|_\infty<\infty,
		\end{align*}
		where the supremum is taken over all stopping times $S,T$. 
		For a process $V$ in $\bmo_\phi$, its modulus of oscillation $\rho^\phi(V)$ is defined by
		\begin{align*}
			\rho^\phi_{s,t}(V)=\sup_{s\le S\le T\le t}\|\E_S(\phi^*_{S,T})^{-1}|V_T-V_{S-}|\|_\infty, \quad 0\le s\le t\le \tau,
		\end{align*}
		where $\phi^*_{s,t}=\sup_{r\in[s,t]}\phi_r$ and  $S,T$ denote generic stopping times. 
		We also define 
		\begin{align*}
			\kappa^\phi_{s,t}(V)=\lim_{h\downarrow0}\sup_{s\le u\le v\le t,v-u\le h}\rho^\phi_{u,v}(V).
		\end{align*}
	\end{definition}
	We note that $\rho^\phi$ is well-defined and 
	\begin{align*}
		\rho^\phi_{s,t}\le\sup_{s\le S\le T\le t} \|\phi_S^{-1}\E_S|V_T-V_{S-}|\|_\infty\le[V]_{\bmo_\phi}.
	\end{align*}
	The relation between \cref{cor.maximaldagger} and weighted BMO processes is that any RCLL adapted integrable stochastic process is BMO with a suitable weight. Indeed, let $X$ be as in \cref{cor.maximaldagger} and fix $\tau>0$.
	For stopping times $S\le T\le \tau$, we have 
	\begin{align*}
	 	\E_Sd(X_T,X_\tau)=\E_S\1_{(T<\tau)}\lim_{\varepsilon\downarrow0}\E_{T+\varepsilon}d(X_{T+\varepsilon-}, X_\tau)
	 	\le \E_S\sup_{S\le t\le \tau}\E_td(X_{t-},X_\tau)=\E_S X^\sharp_{S,\tau}
	\end{align*}
	and hence 
	\begin{align*}
		\E_S|d(X_0,X_T)-d(X_0,X_{S-})|\le \E_S d(X_T,X_\tau)+\E_Sd(X_{S-},X_\tau)
		\le 2\E_S X^\sharp_{S,\tau}.
	\end{align*}
	This shows that for every $\varepsilon>0$, $d(X_0,X_\cdot)\in\bmo_{2\E_\cdot X^\sharp_{\cdot, \tau}+\varepsilon}$ with $[X]_{\bmo_{2\E_\cdot X^\sharp_{\cdot, \tau}+\varepsilon}}\le 1$. 

	John--Nirenberg inequalities for weighted BMO processes are described in the following result.
		\begin{theorem}\label{thm.bmophi}
		Let $(V_{t})_{t\in[0,\tau]}$ be a process in $\bmo_\phi$.
		Then for every $r\in[0,\tau]$ and every $m\in(0,\infty)$, 
		\begin{gather}\label{ineq.JNweighted}
			\E_r\sup_{r\le t\le \tau}|V_t-V_{r}|^m\le (c_mm\rho^\phi_{r,\tau}(V))^m\E_r|\phi^*_{r,\tau}|^m \quad\text{a.s. }
			\\\shortintertext{and}
			\label{est.jnweight2}
			\E_r\left(\sup_{r\le t\le \tau}\frac{|V_t-V_{r}|}{\phi^*_{r,t}}\right)^m\le(c_mm\rho^\phi_{r,\tau}(V))^m \quad \text{a.s.}
		\end{gather}
		Consequently, 
		\begin{align}\label{est.jnexpwei}
			\sup_{r\in[0,\tau]}\left\|\E_r \exp\left(\lambda\sup_{r\le t\le \tau}\frac{|V_t-V_r|}{\phi^*_{r,t}}\right)\right\|_\infty<\infty \text{ for every } \lambda\le(e^3\kappa^\phi_{r,\tau}(V))^{-1}.
		\end{align}
		Additionally, \eqref{ineq.JNweighted}-\eqref{est.jnexpwei} hold with $V_{r-}$ in place of $V_r$.
	\end{theorem}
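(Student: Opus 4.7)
The plan is to establish \eqref{ineq.JNweighted}--\eqref{est.jnexpwei} via a stopping-time iteration in the spirit of the classical John--Nirenberg proof, adapted to the weighted $\bmo_\phi$ setting. For a stopping time $S\in[r,\tau]$ and $\mu>0$, I would introduce $N_t=(\phi^*_{S,t})^{-1}|V_t-V_{S-}|$ on $[S,\tau]$ and stop it at $\sigma_\mu=\inf\{t\ge S:N_t>\mu\}\wedge\tau$. Applying the defining inequality of $\rho^\phi_{S,\tau}(V)$ at the pair $(S,\sigma_\mu)$ gives $\E_S N_{\sigma_\mu}\le \rho^\phi_{r,\tau}(V)$, while right-continuity ensures $N_{\sigma_\mu}\ge\mu$ on $\{\sup_{[S,\tau]}N>\mu\}$; a Chebyshev-type step then yields the weak-type bound $\PP_S(\sup_{S\le t\le\tau}N_t>\mu)\le \rho^\phi_{r,\tau}(V)/\mu$ a.s.

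\emph{Iteration and geometric tails.} Recursively define $S_0=r$ and $S_k=\inf\{t\ge S_{k-1}:(\phi^*_{S_{k-1},t})^{-1}|V_t-V_{S_{k-1}-}|>\mu\}\wedge\tau$. Iterating the weak-type bound through the tower property gives $\PP_r(S_k<\tau)\le (\rho^\phi_{r,\tau}(V)/\mu)^k$ a.s. A telescoping estimate of $V_t-V_r$ across the blocks $[S_{k-1},S_k]$, using the jump bound $|\Delta V_{S_j}|\le \rho^\phi_{r,\tau}(V)\phi_{S_j}$ (itself a consequence of the BMO condition at the diagonal $S=T=S_j$), converts this into the geometric tails
\[
\PP_r\Big(\sup_{r\le t\le\tau}\frac{|V_t-V_r|}{\phi^*_{r,t}}>k\mu\Big)\lesssim\Big(\frac{\rho^\phi_{r,\tau}(V)}{\mu}\Big)^{k}\quad\text{and}\quad \PP_r\Big(\sup_{r\le t\le\tau}|V_t-V_r|>k\mu\,\phi^*_{r,\tau}\Big)\lesssim\Big(\frac{\rho^\phi_{r,\tau}(V)}{\mu}\Big)^{k}.
\]

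\emph{From tails to norms.} Integrating the geometric tails against $m\lambda^{m-1}\,d\lambda$ and optimizing $\mu$, the choice $\mu=\rho^\phi_{r,\tau}(V)(1+1/m)$ produces the constant $c_m m$ of \eqref{def.M} and yields both \eqref{ineq.JNweighted} and \eqref{est.jnweight2}. The exponential bound \eqref{est.jnexpwei} follows by Taylor-expanding $\exp(\lambda\,\cdot)$ and summing the $L^m$ estimates from \eqref{est.jnweight2} over $m\in\mathbb{N}$; finiteness of the resulting series is precisely the ratio test that defines $M$ in \eqref{def.M}. To upgrade the modulus from $\rho^\phi$ to $\kappa^\phi$ in the exponential range $\lambda\le(e^3\kappa^\phi_{r,\tau}(V))^{-1}$, one partitions $[r,\tau]$ into finitely many sub-intervals of sufficiently small diameter so that the local $\rho^\phi$-modulus on each piece is within $\varepsilon$ of $\kappa^\phi_{r,\tau}(V)$, and multiplies the local exponential estimates along consecutive blocks via the tower property. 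The $V_{r-}$-version of all three estimates follows from $|V_t-V_{r-}|\le|V_t-V_r|+|\Delta V_r|$ together with the pointwise jump bound $|\Delta V_r|\le\rho^\phi_{r,\tau}(V)\phi_r$.

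\emph{Main obstacle.} The central subtlety is that $\rho^\phi$ controls $\E_S((\phi^*_{S,T})^{-1}|V_T-V_{S-}|)$ with the random weight $\phi^*_{S,T}$ sitting \emph{inside} the conditional expectation and \emph{not} $\cff_S$-measurable, so a direct Chebyshev step on $|V_T-V_{S-}|$ alone is unavailable. The remedy, carried out in the first step, is to stop the normalized process $N_t$ itself, so that at $\sigma_\mu$ the weight is absorbed into the value of $N$. Secondary technical points are the bookkeeping of jumps in the telescoping step and the partitioning argument that sharpens $\rho^\phi$ to $\kappa^\phi$ in the exponential estimate.
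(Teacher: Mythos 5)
Your argument is correct in substance but reaches \eqref{ineq.JNweighted} and \eqref{est.jnweight2} by a genuinely different route. The paper does not iterate stopping times to produce geometric tails; instead, for each level pair $(\alpha,\alpha+\beta)$ it introduces the two entrance times $S,T$ of $|V_\cdot-V_r|$ past $\alpha$ and past $\alpha+\beta$, notes the inclusion $\{V^*>\alpha+\beta\}\subset\{|V_T-V_{S-}|\ge\beta,\ V^*>\alpha\}$ (using $|V_{S-}-V_r|\le\alpha$, which sidesteps the jump at $S$ entirely), and splits according to whether $\phi^*_{S,T}>\theta\beta$ to obtain the good-$\lambda$-type inequality $\PP(V^*>\alpha+\beta,\ G)\le\theta\PP(V^*>\alpha,\ G)+\PP(\phi^*>\theta\beta,\ G)$ for $G\in\cff_r$; the elementary integration in \cref{lem.xy}, with the optimized choices $h=1/m$ and $\theta=(m/(m+1))^{m+1}$, is precisely what produces the constant $c_mm$ of \eqref{def.M}. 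Your John--Nirenberg iteration (weak-type bound for the stopped normalized process, tower property, telescoping across blocks with the diagonal jump bound $|\Delta V_S|\le\rho^\phi\phi_S$) yields the same conclusions with a constant of the same order $O(m)$ as $m\to\infty$, and your device of stopping $N_t$ itself to absorb the non-$\cff_S$-measurable weight is sound; the one over-claim is that taking $\mu=\rho^\phi(1+1/m)$ ``produces the constant $c_mm$'': summing $((k+1)\mu)^m(\rho^\phi/\mu)^k$ over $k$ gives an expression whose optimum over $\mu$ need not coincide with $(c_mm\rho^\phi)^m$ (for instance, for small $m$ a fixed ratio $\rho^\phi/\mu$ yields a factor growing like $e^{c/m}$ rather than polynomially in $1/m$), so you would either have to carry out that optimization explicitly or state the theorem with a different universal constant. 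The remaining steps --- deriving \eqref{est.jnexpwei} by Taylor expansion on subintervals where $\lambda\rho^\phi\le e^{-3}$, chaining by the tower property to upgrade $\rho^\phi$ to $\kappa^\phi$, and treating $V_{r-}$ --- coincide with the paper's argument.
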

	Although not stated,
	\cref{cor.maximaldagger,thm.bmophi} have natural discrete analogues. 
	Neither one of \eqref{ineq.JNweighted} and \eqref{est.jnweight2} implies the other. However, \eqref{est.jnweight2} has an advantage of being meaningful even when $\phi^*_{r,\tau}$ is not $L^m$-integrable. While \eqref{ineq.JNweighted} may be comparable to \cite[Theorem 1]{MR2136865}, \cite[Theorem III.5.2]{MR0448538} and \cite[Lemma A.1.2]{MR2190038}, the estimates \eqref{est.jnweight2} and \eqref{est.jnexpwei} are genuine and inspired from similar phenomenon in the theory of singular integrals \cite{MR1913610,MR3124931}. Normalized estimates such as \eqref{est.jnexpwei} play important roles in the theory of self-normalized processes \cite{MR2488094} and self-normalized  large deviations \cite{MR1428510,MR2016616}.
	The proofs of \cref{cor.maximaldagger,thm.bmophi} are presented in \cref{sec:weighted_bmo_processes}.

	To put \cref{thm.bmophi} into a perspective, we apply it to martingales for which many classical inequalities are readily available. We rediscover in a systematic and natural way the close relations between a martingale with its square functions, which is the heart of the works of Burkholder, Davis and Gundy \cite{MR0400380,MR365692,davis1970intergrability}. Although we are not able to recover all of the known results solely on this method, \cref{thm.bmophi} turns out to be very robust and we discover new inequalities for martingales. These results are presented in \cref{sec:inequalities_for_martingales}. \cref{cor.maximaldagger} could also be used here, however, we have learned that applying \cref{thm.bmophi} directly to martingales gives better results.

	A discussion on weighted BMO processes would be incomplete without mentioning processes with vanishing weighted mean oscillation. Results for these processes are of independent interests and are presented in \cref{sec:weighted_vmo_spaces}. 
	For further applications of BMO processes, we refer to the companion paper \cite{le2022stochastic} and the references therein.
	Applications of \eqref{est.Smaximal} will be discussed elsewhere for conciseness.

	\textit{Relation to previous works.}
	The later estimate in \eqref{est.Smaximal} is closely related to  Fefferman--Stein's estimate for the sharp maximal function in \cite{MR447953}. Indeed, let $f:\Rd\to\R$ be a locally integrable function and define the  maximal functions
	\begin{align*}
		f^\sharp(x)=\sup_{Q\ni x}\frac1{|Q|}\int_{Q}|f(y)-f_Q|dy, \quad \cmm f(x)=\sup_{Q\ni x}\frac1{|Q|}\int_{Q}|f(y)|dy,
	\end{align*}
	where  $Q$ denotes a cube in $\Rd$, $|Q|$ is the Lebesgue measure of $Q$ and $f_Q=\frac1{|Q|}\int_Qf(y)dy$. Then \cite[Theorem 5]{MR447953} shows that
	\begin{align*}
		\|\cmm f\|_{L^p(\Rd)}\le A_p\|f^\sharp\|_{L^p(\Rd)}, \quad p\in(1,\infty),
	\end{align*}
	which is comparable to the second estimate in \eqref{est.Smaximal}.
	For further connections with the theory of singular integrals, we refer to \cite{MR447953,MR341601}, appendix A.1 of \cite{MR2190038},  remark III.5.1 of \cite{MR0448538} and chapter 3 of \cite{MR3617205}.

	Weighted BMO sequences and discrete martingales were considered earlier by Garsia \cite{MR0448538}, Stroock \cite{MR341601}, Stroock and Varadhan \cite{MR2190038}, and weighted BMO processes were considered by Geiss \cite{MR2136865}. 
	\khoa{also Garsia \cite{MR0448538} thm III.5.2, see Remark III.5.1, thm II.1.2} 
	Inequality \eqref{ineq.JNweighted} is known (at least implicitly) to Geiss. However, John--Nirenberg inequalities are formulated as equivalence of norms in \cite{MR2136865} and require an additional condition on the weight process $\phi$.
	In contrast, we have tried our best to frame the results in \cref{thm.bmophi} under simplest assumptions. This is important and beneficial because \cref{cor.maximaldagger} and martingale inequalities (\cref{sec:inequalities_for_martingales}) are consequences of \cref{thm.bmophi} but not earlier results.
	The relation between the exponential constant and modulus of mean oscillation has been observed in the author's previous work \cite{le2022stochastic} in the absence of weight. The proof of \cref{thm.bmophi} is different from \cite{le2022stochastic} but still follows similar arguments which appeared in earlier works. However our presentation simplifies previous proofs. 
	In literatures on probability theory, 
	John--Nirenberg inequalities are often derived for BMO martingales and their validity for other processes is rarely discussed. Some connections between weighted BMO martingales and BDG inequalities can be found in \cite{MR0448538} through a different point of view from the current article.

\section{Proofs of main results} 
\label{sec:weighted_bmo_processes}
	We will need the following elementary lemma.
	\begin{lemma}\label{lem.xy}
		If $X$ and $Y$ are nonnegative random variables satisfying
		\begin{align*}
			\PP(Y> \alpha+\beta)\le \theta\PP(Y>\alpha)+\PP(X>\theta\beta)
		\end{align*}
		for every $\alpha>0$, $\beta>0$ and $\theta\in(0,1)$; then for for every $m\in(0,\infty)$, 
		\begin{align*}
			\|Y\|_m\le c_m m\|X\|_m
		\end{align*}
		where the constant $c_m$ is defined in \eqref{def.M}.
	\end{lemma}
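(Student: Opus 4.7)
The strategy is the classical good-$\lambda$ argument, combined with a careful two-parameter optimization to match the explicit constant $c_m m$ in \eqref{def.M}.

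To sidestep any issue of $\|Y\|_m$ being \emph{a priori} infinite, I would work with the truncated quantity $I_R := m\int_0^R t^{m-1}\PP(Y>t)\,dt$, which satisfies $I_R \le R^m <\infty$ for every $R>0$. It will suffice to establish $I_R \le (c_m m)^m \|X\|_m^m$ uniformly in $R$, since monotone convergence will then yield $\E Y^m \le (c_m m)^m \E X^m$.

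Fix parameters $\gamma,\eta\in(0,1)$ (to be chosen later) and set $\theta:=\eta\gamma^m\in(0,1)$. For each $t>0$, applying the hypothesis with $\alpha=\gamma t$ and $\beta=(1-\gamma)t$ gives
\begin{align*}
\PP(Y>t)\le \theta\,\PP(Y>\gamma t)+\PP\bigl(X>\theta(1-\gamma)t\bigr).
\end{align*}
Multiplying by $mt^{m-1}$, integrating over $[0,R]$, and performing the substitutions $s=\gamma t$ and $s=\theta(1-\gamma)t$ respectively, one arrives at
\begin{align*}
I_R\le \eta\, I_R + \bigl(\eta\gamma^m(1-\gamma)\bigr)^{-m}\|X\|_m^m,
\end{align*}
since the second substitution only enlarges the range of integration and $m\int_0^\infty s^{m-1}\PP(X>s)\,ds=\|X\|_m^m$. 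Absorbing the $Y$-term (possible because $\eta<1$) gives
\begin{align*}
I_R\le \frac{1}{(1-\eta)\,\eta^m\,\gamma^{m^2}(1-\gamma)^m}\,\|X\|_m^m.
\end{align*}

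The main (essentially only) work is optimizing the prefactor. Elementary calculus shows $\gamma\mapsto\gamma^{m^2}(1-\gamma)^m$ is maximized at $\gamma=m/(m+1)$ and $\eta\mapsto(1-\eta)\eta^m$ at $\eta=m/(m+1)$. Substituting these, the denominator becomes $m^{m(m+1)}/(m+1)^{(m+1)^2}$, whose reciprocal equals $m^m\cdot m\bigl(1+\tfrac1m\bigr)^{(m+1)^2}=(c_m m)^m$ by the definition of $c_m$ in \eqref{def.M}. Sending $R\to\infty$ concludes the proof. I expect the arithmetic needed to match the constant $c_m m$ exactly to be the most delicate point; the good-$\lambda$ scheme itself is routine.
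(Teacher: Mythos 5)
Your proof is correct and is essentially the paper's own argument: the paper sets $\beta=h\alpha$, integrates against $m\alpha^{m-1}\,d\alpha$ over a truncated range, and picks $h=1/m$, $\theta=(m/(m+1))^{m+1}$, which is exactly your parametrization with $\gamma=1/(1+h)=m/(m+1)$ and $\theta=\eta\gamma^m$ at $\eta=m/(m+1)$. Your optimization and the resulting constant $(c_mm)^m$ match the paper's.
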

	\begin{proof}
		We choose $\beta= h \alpha$ for some $h>0$ and integrate the inequality with respect to $m \alpha^{m-1}d \alpha$ over $(0,k/(1+h))$ to get that
		\begin{align*}
			(1+h)^{-m} \int_0^km \alpha^{m-1}\PP(Y>\alpha)d \alpha
			&\le \theta\int_0^{k}m \alpha^{m-1}\PP(Y>\alpha)d \alpha
			\\&\quad+\int_0^\infty m \alpha^{m-1}\PP(X>\theta h  \alpha)d \alpha.
		\end{align*}
		Sending $k\to\infty$ and using the layer cake representation $\E X^m=\int_0^\infty m \alpha^{m-1}\PP(X>\alpha)d \alpha$, we obtain that
		\begin{align*}
			\left[(1+h)^{-m}-\theta\right]\E Y^m\le (\theta h)^{-m}\E X^m.
		\end{align*}
		We now choose $h=\frac1m$ and $\theta=\left(\frac m{m+1}\right)^{m+1}$ to obtain the result.		
	\end{proof} 
	We note that the condition in \cref{lem.xy} is satisfied if $\PP(Y>\alpha+\beta,\ X\le \theta \beta)\le \theta\PP(Y>\alpha)$ which is closely related to the condition for Burkholder's good $\lambda$-inequality \cite[Lemma 7.1]{MR365692}. The formulation in \cref{lem.xy} is more convenient for our  purpose. 

	\begin{proof}[\bf Proof of \cref{thm.bmophi}]
		We fix $r\in[0,\tau)$ and assume without loss of generality that $\rho^\phi_{r,\tau}(V)=1$.
		We put $V^*=\sup_{t\in[r,\tau]}|V_t- V_{r}|$ and $\phi^*=\sup_{t\in[r,\tau]} \phi_r$.
		Let $\alpha, \beta$ be two positive numbers and define
		\begin{align*}
			S=\tau\wedge\inf\{t\in[r,\tau]:|V_t- V_{r}|> \alpha\}, 
			\quad 
			T=\tau\wedge\inf\{t\in[r,\tau]:|V_t- V_{r}|> \alpha+\beta\}, 
		\end{align*}
		with the standard convention that $\inf(\emptyset)=\infty$. Clearly $S$ and $T$ are stopping times and $r\le S\le T\le \tau$.
		On the event $\{V^*> \alpha+\beta\}$, we have $|V_T- V_{r}|\ge \alpha+\beta$, $|V_S-V_r|\ge \alpha$ and $|V_{S-}- V_{r}|\le \alpha$.
		The last inequality needs some justification. By right-continuity, there is $\varepsilon>0$ such that $|V_{s}-V_r|<\alpha$ for every $s\in[r,r+\varepsilon]$, which implies that $S>r$. Then one has $|V_{S-}-V_{r}|\le \alpha$ by definition of $S$. (The case when $V_{r}$ is replaced by $V_{r-}$, the inequality $|V_{S-}-V_{r-}|\le \alpha$ is trivial if $S=r$.)

		Using the triangle inequality $|V_T-V_r|\le|V_T-V_{S-}|+|V_{S-}-V_r|$, this implies that
		\begin{align*}
			\{V^*> \alpha+\beta\}\subset\{|V_T-V_{S-}|\ge \beta, V^*> \alpha\}.
		\end{align*}
		It follows that for every $G\in\cff_r$ and every $\theta\in(0,1)$,
		\begin{align*}
			\PP(V^*>\alpha+\beta,\ G)
			&\le \PP(|V_T-V_{S-}|\ge \beta,\ V^*>\alpha,\ G)
			\\&\le \PP(|V_T-V_{S-}|\ge \theta^{-1}\phi^*_{S,T},V^*>\alpha,\ G)
			+\PP( \phi^*> \theta \beta,V^*>\alpha,\ G).
		\end{align*}
		By conditioning, noting that $\{V^*>\alpha\}$ is $\cff_S$-measurable, and using definition of $\bmo_\phi$, we have
		\begin{align*}
			\PP(|V_T-V_{S-}|\ge \theta^{-1}\phi^*_{S,T},V^*>\alpha,\ G)\le \theta\PP(V^*>\alpha,\ G).
		\end{align*}
		Hence, we obtain from the above that
		\begin{align*}
			\PP(V^*> \alpha+\beta,\ G)
			&\le \theta\PP(V^*> \alpha,\ G)+\PP( \phi^*> \theta \beta,\ G).
		\end{align*}
		Applying \cref{lem.xy},
		\begin{align*}
			\|V^*\1_G\|_m\le c_mm\|\phi^*\1_G\|_m.
		\end{align*}
		Since $G$ is arbitrary in $\cff_r$, we obtain \eqref{ineq.JNweighted}.

		To show \eqref{est.jnweight2}, we follow a similar argument. This time, for $\alpha,\beta>0$, we define
		\begin{align*}
			&S=\tau\wedge\inf\{t\in[r,\tau]:(\phi^*_{r,t})^{-1} |V_t-V_{r}|> \alpha\}, 
			\\&T=\tau\wedge\inf\{t\in[r,\tau]:(\phi^*_{r,t})^{-1} |V_t-V_{r}|> \alpha+\beta\}.
		\end{align*}
		We have by triangle inequality and monotonicity,
		\begin{align*}
			\frac{|V_T-V_{r}|}{\phi^*_{r,T}}\le \frac{|V_T-V_{S-}|}{\phi^*_{S,T}}+\frac{|V_{S-}-V_{r}|}{\phi^*_{r,S-}}
		\end{align*}
		so that putting $(V/\phi)^*=\sup_{r\le t\le \tau}|V_t-V_r|/\phi^*_{r,t}$, we have
		\begin{align*}
			\left\{\left(\frac V \phi\right)^* > \alpha+\beta\right\}\subset\left\{\frac{|V_T-V_{S-}|}{\phi^*_{S,T}}\ge \beta,\ \left(\frac V \phi\right)^*> \alpha\right\}.
		\end{align*}
		From here, we obtain \eqref{est.jnweight2} through \cref{lem.xy} using similar arguments as previously.

		Inequalities \eqref{ineq.JNweighted} and \eqref{est.jnweight2} with  $V_{r-}$ in place of $V_r$ are obtain analogously.

		For $\lambda  \kappa^\phi_{r,\tau}(V)<e^{-3}$, there is an $h_0>0$ such that $\lambda \rho^\phi_{s,t}(V)\le e^{-3}$ whenever $t-s\le h_0$ and $s,t\in[r,\tau]$. For such $s,t$, we have by Taylor's expansion and \eqref{est.jnweight2} that 
		\begin{align*}
			\left\|\E_s \exp\left({\lambda\sup_{u\in[s,t]}\frac{|V_u-V_s|}{\phi^*_{s,u}}}\right)\right\|_\infty
			&\le\sum_{m=0}^\infty \frac{\lambda^m}{m!}\E_s\left(\sup_{u\in[s,t]}\frac{|V_u-V_s|}{\phi^*_{s,u}}\right)^m
			\\&\le1+\sum_{m=1}^\infty \frac{\lambda^m}{m!}(c_mm)^m(\rho^\phi_{s,t}(V))^m\le M,
		\end{align*}
		where $M$ is defined in \eqref{def.M}.
		Putting $Z_t=\sup_{u\in[r,t]}\frac{|V_u-V_r|}{\phi^*_{r,u}}$, since 
		\begin{align*}
		 	Z_t-Z_s\le\sup_{u\in[s,t]}\frac{|V_u-V_s|}{\phi^*_{s,u}},
		 \end{align*}
		  the previous estimate also implies that $\|\E_s e^{\lambda(Z_t-Z_s)}\|_\infty\le M$ whenever $t-s\le h_0$.
		Now partition $[0,\tau]$ by points $0=t_0<t_1<\ldots<t_n=\tau$ so that $\max_{1\le k\le n}(t_k-t_{k-1})\le h_0$. Then
		\begin{align*}
			\E_r e^{\lambda(Z_\tau-Z_r)}
			= \E_r e^{\lambda({Z_{t_{n-1}}-Z_r})} e^{\lambda({Z_{t_n}-Z_{t_{n-1}}})}
			\le  \E_r e^{\lambda({Z_{t_{n-1}}-Z_r})}\|\E_{t_n} e^{\lambda({Z_{t_n}-Z_{t_{n-1}}})}\|_\infty.
		\end{align*}
		Iterating the previous inequality yields
		\begin{align*}
			\Big\|\E_r e^{\lambda({Z_\tau-Z_r})}\Big\|_\infty\le \Big\|\E_r e^{\lambda{(Z_{t_j}-Z_r)}}\Big\|_\infty\prod_{k=j+1}^n \Big\|\E_{t_{k-1}} e^{\lambda({Z_{t_k}-Z_{t_{k-1}}})}\Big\|_\infty
		\end{align*}
		where $j$ is such that  $t_{j-1}\le r<t_j$. This implies that  $\|\E_r e^{\lambda({Z_\tau-Z_r})}\|_\infty\le M^n$
		which shows  \eqref{est.jnexpwei}.
		To show \eqref{est.jnexpwei} with $V_{r-}$, we use $\bar Z_t=\sup_{u\in[r,t]}\frac{|V_u-V_{r-}|}{\phi^*_{r,u}}$ instead of $Z_t$ and follow the same arguments.
	\end{proof}

	\begin{proof}[\bf Proof of \cref{cor.maximaldagger}]
		Let $\phi$ be a RCLL version of  the process $t\mapsto 2\E_tX^\sharp_{t,\tau}$. We have seen earlier in \cref{sec:introduction} that for every $\varepsilon>0$, $V_{\cdot}:=d(X_0,X_{\cdot})$ belongs to $\bmo_{\phi+\varepsilon}$ with $[V]_{\bmo_{\phi+\varepsilon}}\le 1$. 
		Applying \eqref{ineq.JNweighted} and sending $\varepsilon\downarrow0$, we have for every $m\in(0,\infty)$ that
		\begin{align*}
		 	\|\sup_{t\in[0,\tau]}|V_t|\|_m\le (2c_mm)\|\sup_{t\in[0,\tau]}\E_tX^\sharp_{t,\tau}\|_m,
		\end{align*}
		which shows \eqref{est.upperbound}.
		\eqref{est.X2} is a direct consequence of \eqref{est.jnweight2} and \eqref{est.X3} follows from \eqref{est.X2}.
		When $m>1$, combining with the Doob's maximal inequality 
		\[
			\|\sup_{t\le \tau}\E_tX^\sharp_{t,\tau}\|_m\le \|\sup_{t\le \tau}\E_tX^\sharp_\tau\|_m\le \frac m{m-1}\|X^\sharp_\tau\|_m,
		\] we obtain the later inequality in \eqref{est.Smaximal}.  We observe that  by triangle inequality,
		\begin{align*}
			X^\sharp_\tau\le\sup_{t\in[0,\tau]}(\E_td(X_0,X_\tau)+d(X_0,X_t)).
		\end{align*}
		Hence, by the Doob's maximal inequality, we have $\|X^\sharp_\tau\|_m\le (m/(m-1)+1)\|\sup_{t\in[0,\tau]}d(X_0,X_t)\|_m$, showing the first inequality in \eqref{est.Smaximal}. 

		To show \eqref{est.Smax01}, we note that $(\E_tX^{\sharp}_{t,\tau})$ is a non-negative supermartingale. Hence by \cite[Chapter II (1.15)]{MR1725357} $\PP(\sup_{t\le \tau}\E_tX^\sharp_{t,\tau}>\alpha)\le \frac1 \alpha\E_0X^\sharp_{\tau}$. Applying \cite[Lemma 2.22]{kuhn2022maximal} we have
		\begin{align*}
			\|\sup_{t\le \tau}\E_tX^\sharp_{t,\tau}\|_m\le (1-m)^{-1/m}\|\E_0X^{\sharp}_\tau\|_m
		\end{align*}
		which, together with \eqref{est.Smaximal}, shows \eqref{est.Smax01}.  
	\end{proof}


\section{Inequalities for martingales} 
\label{sec:inequalities_for_martingales}	
	In order to reveal the role of weighted BMO processes, we apply \cref{thm.bmophi} to martingales and positive sequences.
	We discover new ranges for  classical inequalities from the works of Burkholder, Davis and Gundy.
	We start with the following result, which is the dual of Doob's martingale inequality.
	\begin{proposition}
		Let $z_1,z_2,\ldots$ be positive random variables. Then for every $m\in(0,\infty)$, we have
		\begin{align}\label{doob0}
			\|\sum_{i=1}^\infty\E_{i}z_i\|_m\le 2c_mm\|\sup_{k}\E_k\sum_{i=k}^\infty z_i\|_m.
		\end{align}
		If $m\in(1,\infty)$,
		\begin{align}\label{doob1}
			\|\sum_{i=1}^\infty\E_{i}z_i\|_m\le 2c_mmm'\|\sum_{i=1}^\infty z_i\|_m.
		\end{align}
	\end{proposition}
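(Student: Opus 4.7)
The plan is to reduce both inequalities to the discrete analogue of \cref{thm.bmophi} (announced in the paper) by locating a natural weighted BMO structure for the partial-sum sequence.

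For \eqref{doob0}, I would set $V_n := \sum_{i=1}^n \E_i z_i$ with $V_0 := 0$, and choose as weight the adapted process $\phi_n := \E_n \sum_{i=n}^\infty z_i$. Because $V_{S-} = V_{S-1}$ in discrete time, the tower property together with $z_i \ge 0$ give, for any stopping times $S \le T$,
\[
\E_S(V_T - V_{S-}) = \E_S \sum_{i=S}^T \E_i z_i = \E_S \sum_{i=S}^T z_i \le \E_S \sum_{i=S}^\infty z_i = \phi_S,
\]
showing $V \in \bmo_{\phi + \varepsilon}$ with BMO norm at most $1$ for any $\varepsilon > 0$ (the $\varepsilon$ is a cosmetic regulariser ensuring strict positivity of the weight, exactly as in the proof of \cref{cor.maximaldagger}). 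Applying (the discrete form of) \eqref{ineq.JNweighted} at $r = 0$ with finite horizon $\tau$ and then letting $\varepsilon \downarrow 0$ and $\tau \to \infty$ by monotone convergence would yield $\|\sum_{i=1}^\infty \E_i z_i\|_m \le c_m m \|\sup_k \phi_k\|_m$, which is even a factor $2$ sharper than \eqref{doob0}. The stated constant $2c_m m$ is the one produced by invoking \eqref{est.upperbound} of \cref{cor.maximaldagger} with $X_n = V_n$ instead, because of the triangle-inequality doubling that is built into the weight $2\E_\cdot X^\sharp$ there.

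For \eqref{doob1} in the range $m > 1$, I would bootstrap \eqref{doob0} by a single Doob step. The weight $\phi_k$ is dominated pointwise by the nonnegative martingale $N_k := \E_k \sum_{i=1}^\infty z_i$, so Doob's $L^m$ maximal inequality gives $\|\sup_k \phi_k\|_m \le \|\sup_k N_k\|_m \le m' \|N_\infty\|_m = m' \|\sum_i z_i\|_m$, and substituting into \eqref{doob0} yields \eqref{doob1}.

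I do not anticipate any genuine obstacle: the argument is essentially a dictionary translation between the BMO framework of \cref{thm.bmophi} and the partial-sum setup. The only step demanding a modicum of care is the passage $\tau \to \infty$, which is painless since both $V_\tau$ and $\sup_{k\le\tau}\phi_k$ are nondecreasing in $\tau$; the constant $m'$ appearing in \eqref{doob1} is precisely the cost of a single Doob application.
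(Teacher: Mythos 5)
Your argument is essentially identical to the paper's proof: both identify $V_k=\sum_{i=1}^k\E_i z_i$ as a $\bmo_\phi$ process with weight $\phi_k=\E_k\sum_{i\ge k}z_i$ via the tower property, apply the discrete form of \eqref{ineq.JNweighted} at $r=0$ and pass to the limit in the horizon, and then deduce \eqref{doob1} from \eqref{doob0} by a single application of Doob's maximal inequality to the dominating nonnegative martingale. Your side remark that the BMO norm is at most $1$, so the constant could even be taken as $c_m m$, is consistent with the computation; the paper simply states the looser constant $2c_m m$.
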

	\begin{proof} 
		Define $V_k=\sum_{i=1}^k\E_{i}z_i$. From $\E_{k}(V_n-V_{k-1})=\E_{k}\sum_{i=k}^nz_i$,
		we see that $(V_k)_{k\le n}$ is BMO with the weight $(\phi_k=\E_k\sum_{i=k}^n z_i)_{k\le n}$. Applying \cref{thm.bmophi}, we have for every $m\in(0,\infty)$
		\begin{align*}
			\|\sum_{i=1}^n\E_{i}z_i\|_m\le 2c_mm\|\sup_{k\le n}\E_k\sum_{i=k}^n z_i\|_m.
		\end{align*}
		Sending $n\to\infty$ yields the first claim. The second claim follows from the first and Doob's maximal inequality.	
	\end{proof}
	In \cite{MR0400380}, Burkholder--Davis--Gundy  show \eqref{doob1}  with a different constant for all $m\in[1,\infty)$ (note that the case $m=1$ is trivial). The estimate \eqref{doob0} appears to be new. We take the chance to mention that it is shown in \cite[Theorem 20.1]{MR365692} that
	\begin{align*}
		\|\sum_{i=1}^\infty\E_{i}z_i\|_m\ge 2^{-1/m} \|\sum_{i=1}^\infty z_i\|_m, \quad m\in(0,1).
	\end{align*}

	Let $f=(f_{k})_{k\ge0}$ be a square integrable discrete martingale, $f_{-1}=0$. Define for each $k\le n$
	\begin{gather*}
		df_n=f_n-f_{n-1},\quad f^*_{k,n}=\sup_{k\le j\le n}|f_j-f_k|, \quad f^*_n=f^*_{0,n},
		\\ [f]_{k,n}=\sum_{j=k}^{n}|df_j|^2, \quad [f]_n=[f]_{0,n},
		\\\wei{f}_{k,n}= \sum_{j=k+1}^{n+1}\E_{j-1}|df_j|^2,\quad \wei{f}_n=\wei{f}_{0,n}.
	\end{gather*}
	From the martingale property, we have the following relations 
	\begin{gather}\label{id.isometry}
	 	\E_i|f_k-f_{i-1}|^2= \E_i[f]_{i,k}=\E_i([f]_k-[f]_{i-1}),
	 	\\\shortintertext{and}
	 	\label{id.condiso}
	 	\E_{i}(\wei{f}_k-\wei{f}_{i-1})=\E_i\wei{f}_{i,k}=\E_i|f_{k+1}-f_i|^2=\E_i[f]_{i+1,k+1}.
	\end{gather}
	\begin{theorem}\label{thm.dismart}
		For $m\in(0,\infty)$
		\begin{gather}
			\sup_{k\le n}\|f^*_{k,n}\|_{2m}\le 4c_{2m}m\Big\|\sup_{k\le n}\E_k[f]_{k,n}\Big\|_{m}^{1/2}\le 4c_{2m}m\Big\|\sup_{k\le n}(\E_k\wei{f}_{k,n-1}+|df_k|^2)\Big\|_{m}^{1/2},
			\label{bdg.f10}
			\\\|[f]_{n}\|_m\le 2c_mm\Big\|\sup_{k\le n}\E_k|f^*_{k,n}|^2\Big\|_m,
			\label{bdg.fbk0}
			\\\|\wei{f}_{n}\|_m\le 2c_mm\min\left\{\Big\|\sup_{k\le n}\E_k[f]_{k+1,n+1}\Big\|_m,\ \Big\|\sup_{k\le n}\E_k|f^*_{k,n+1}|^2\Big\|_m\right\}.
			\label{bdg.fweisup0}
		\end{gather}
		If $m\in(1,\infty)$, $m'=m/(m-1)$, we have
		\begin{gather}
			\sup_{k\le n}\|f^*_{k,n}\|_m\le 4c_{2m}mm'\|[f]_{n}\|_{m}^{1/2},
			\label{bdg.f11}
			\\\sup_{k\le n}\|f^*_{k,n}\|_{2m}\le 4c_{2m}m\left((m')^{1/2}\|\wei{f}_{n-1}\|_{m}^{1/2}+\|\sup_{k\le n}|df_k|\|_{2m}\right),\label{bdg.rosenthal}
			\\\|[f]_n\|_m\le 8c_mmm'\|f^*_{n}\|^2_{2m},
			\label{bdg.fbk1}
			\\\|\wei{f}_n\|_m\le 8c_mmm'\min\left\{\|[f]_{n+1}\|_m,\ \|f^*_{n+1}\|^2_{2m}\right\}.
			\label{bdg.fweisup1}
		\end{gather}
		In addition, there is a universal constant $\lambda>0$ such that for every $n\ge k\ge0$ and each pair $(V_{k},\Phi_{k,n})$ in 
		$$
			\left\{\left(f_k,\sup_{i\le k}(\E_i[f]_{i,n})^{\frac12}\right),\left([f]_{k},\sup_{i\le k}\E_i|f^*_{i,n}|^2\right), \left(\wei{f}_{k},\sup_{i\le k}\min(\E_i[f]_{i+1,n+1},\E_i|f^*_{i,n+1}|^2)\right) \right\},
		$$
		we have $\sup_{\varepsilon>0}\E e^{\lambda\sup_{k\le n}\frac{|V_{k}|}{\varepsilon+\Phi_{k,n}}}\le M$.
	\end{theorem}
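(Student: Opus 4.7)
The plan is to apply the discrete analogue of \cref{thm.bmophi} to three suitable $(V,\phi)$ pairs built from $f$, $[f]$, and $\wei{f}$, then combine with Doob's maximal inequality to extract the $m>1$ versions, and finally read off the exponential bound directly from \eqref{est.jnexpwei}.

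For \eqref{bdg.f10} I take $V_k=f_k$ and $\phi_k=(\E_k[f]_{k,n})^{1/2}$. Using \eqref{id.isometry} and Jensen's inequality, for all stopping times $S\le T\le n$,
\begin{align*}
\E_S|f_T-f_{S-1}|\le(\E_S|f_T-f_{S-1}|^2)^{1/2}=(\E_S[f]_{S,T})^{1/2}\le\phi_S\le\phi^*_{S,T},
\end{align*}
so $\rho^\phi_{0,n}(f)\le 1$; applying \eqref{ineq.JNweighted} with $2m$ in place of $m$ at each starting index and invoking the identity $\E_k[f]_{k,n}=|df_k|^2+\E_k\wei{f}_{k,n-1}$ (a consequence of \eqref{id.condiso}) produces both forms of \eqref{bdg.f10}. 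For \eqref{bdg.fbk0} I set $V_k=[f]_k$; \eqref{id.isometry} gives
\begin{align*}
\E_S([f]_T-[f]_{S-1})=\E_S|f_T-f_{S-1}|^2\le\E_S|f^*_{S-1,n}|^2,
\end{align*}
so the BMO condition with modulus $\le1$ holds for the weight $\phi_k=\E_k|f^*_{k-1,n}|^2$; the elementary inequality $|f^*_{k-1,n}|^2\le2|f^*_{k,n}|^2+2|df_k|^2$ then lets me pass to the weight $\sup_{i\le k}\E_i|f^*_{i,n}|^2$ that appears in the statement, the jump term being absorbed into a universal constant. For \eqref{bdg.fweisup0} I take $V_k=\wei{f}_k$; telescoping with \eqref{id.condiso} yields
\begin{align*}
\E_S(\wei{f}_T-\wei{f}_{S-1})=\E_S[f]_{S+1,T+1}=\E_S|f_{T+1}-f_S|^2,
\end{align*}
which simultaneously verifies $V\in\bmo_\phi$ for $\phi_k=\E_k[f]_{k+1,n+1}$ and for $\phi_k=\E_k|f^*_{k,n+1}|^2$, so applying \eqref{ineq.JNweighted} with each weight produces the minimum in the statement.

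The $m>1$ estimates \eqref{bdg.f11}--\eqref{bdg.fweisup1} follow from \eqref{bdg.f10}--\eqref{bdg.fweisup0} by applying Doob's maximal inequality $\|\sup_k\E_k Z\|_m\le m'\|Z\|_m$ to the positive martingales $(\E_k[f]_n)$, $(\E_k|f^*_n|^2)$, $(\E_k[f]_{n+1})$, and $(\E_k|f^*_{n+1}|^2)$, together with the monotone bounds $\E_k[f]_{k,n}\le\E_k[f]_n$ and its analogues; the Rosenthal-type inequality \eqref{bdg.rosenthal} is then obtained from the second form of \eqref{bdg.f10} by splitting $\sup_k(\E_k\wei{f}_{k,n-1}+|df_k|^2)$ as a sum of two positive parts, applying Doob only to the $\wei{f}$ summand, and using $\sqrt{a+b}\le\sqrt a+\sqrt b$. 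Finally, the exponential bound is a direct application of \eqref{est.jnexpwei} to each of the three pairs above, observing that $\phi^*_{0,k}=\Phi_{k,n}$ in each case and that $\rho^\phi$ is bounded by an absolute constant, so the threshold $\lambda$ can be taken universally. The main technical nuisance will be the bookkeeping of index shifts ($k-1$ vs.\ $k$, $n$ vs.\ $n+1$) and of the jump term $|df_k|^2$ produced when passing from the BMO weight $\E_k|f^*_{k-1,n}|^2$ to the cleaner form $\E_k|f^*_{k,n}|^2$ in \eqref{bdg.fbk0}; this is either absorbed into universal constants or, as in \eqref{bdg.f10} and \eqref{bdg.rosenthal}, appears explicitly in the final estimate as the extra supremum-jump term.
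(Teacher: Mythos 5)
Your proposal follows essentially the same route as the paper: identify the three BMO pairs via \eqref{id.isometry}--\eqref{id.condiso}, apply the discrete form of \cref{thm.bmophi} to obtain the $m\in(0,\infty)$ bounds, upgrade to $m>1$ through Doob's inequality together with the crude bounds $[f]_{k,n}\le[f]_n$ and $f^*_{k,n}\le 2f^*_n$, and read the exponential bound off \eqref{est.jnexpwei}; you are in fact more explicit than the paper about the Rosenthal splitting and the index bookkeeping. The one soft spot is your claim that the jump term $|df_k|^2$ arising when trading $f^*_{k-1,n}$ for $f^*_{k,n}$ in \eqref{bdg.fbk0} can be ``absorbed into a universal constant'' --- it is a random variable, not a constant, so it must instead be dominated by the weight (e.g.\ via $|df_k|^2=\E_k|df_k|^2\le\E_k|f^*_{k-1,n}|^2$ and a corresponding adjustment of the weight or of the stated right-hand side) --- but the paper's own two-line proof is equally silent on this point, so this is a shared bookkeeping wrinkle rather than a defect of your approach.
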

	\begin{proof}
		Because $\E_i[f]_{i,k}\le\E_i[f]_{i,n}$ whenever $i\le k\le n$, we deduce from \eqref{id.isometry} that $(f_k)_{k\le n}$ is BMO with  weight $\left((\E_k[f]_{k,n})^{1/2}\right)_{k\le n}$.
		Identity \eqref{id.isometry} can also be read from right to left. In particular, because $\E_i|f_k-f_{i-1}|^2\le \E_i|f^*_{i-1,n}|^2$, it implies that $([f]_k)_{k\le n}$ is BMO with weight $(\E_k|f^*_{k-1,n}|^2)_{k\le n}$.
		Similarly, \eqref{id.condiso} implies that $(\wei{f}_k)_{k\le n}$ is BMO with either weight $(\E_k[f]_{k+1,n+1})_{k\le n}$ or weight $(\E_k|f^*_{k,n+1}|^2)_{k\le n}$.
		Applying \cref{thm.bmophi}, we obtain \eqref{bdg.f10}-\eqref{bdg.fweisup0}. The second estimate in \eqref{bdg.f10} is due to $\E_k[f]_{k,n}=\E_k\wei{f}_{k,n-1}+|df_k|^2$.
		We note that $[f]_{k,n}\le[f]_n$ and 
		and $f^*_{k,n}\le 2f^*_n$. Hence, applying Doob's martingale inequality, we obtain \eqref{bdg.f11}-\eqref{bdg.fweisup1} from \eqref{bdg.f10}-\eqref{bdg.fweisup0}.  The exponential integrability of $V/\Phi$ is a direct consequence of \eqref{est.jnexpwei}.
	\end{proof}
	We observe that necessary conditions for \cref{thm.dismart} are the identities \eqref{id.isometry} and \eqref{id.condiso}, which are valid as long as $\E_i df_j df_k=0$ whenever $i\le j< k$. In a more general case, one would need to control $\E_i\sum_{j<k} df_j df_k$ by a suitable weight, however we have not explored this idea.
	Results for continuous square integrable martingales are stated without proof.
	\begin{theorem}\label{thm.contmart}
		Let $(X_t)_{t\ge0}$ be a square integrable continuous martingale with quadratic variation $(\wei{X}_t)_{t\ge0}$. Define $X^*_{s,\tau}=\sup_{s\le r\le \tau}|X_t-X_s|$.
		There is a universal constant $\lambda>0$  such that for every $\tau>0$,
		\begin{align}
			\sup_{\varepsilon>0}\E \exp\left\{\lambda\sup_{t\le \tau}\frac{|X_t-X_0|}{\varepsilon+\sup_{s\in[0,t]}(\E_s\wei{X}_{s,\tau})^{1/2}}\right\}\le M
			\\\shortintertext{and}
			\sup_{\varepsilon>0}\E \exp\left\{\lambda\sup_{t\le \tau}\frac{\wei{X}_t}{\varepsilon+\sup_{s\in[0,t]}\E_s|X^*_{s,\tau}|^2}\right\}\le M.
		\end{align}		
	\end{theorem}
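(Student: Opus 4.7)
The plan is to apply the exponential weighted John–Nirenberg estimate \eqref{est.jnexpwei} of \cref{thm.bmophi} twice, once taking $V$ to be the martingale itself and once taking $V$ to be its quadratic variation, after exhibiting appropriate BMO weights in each case. In both applications an auxiliary parameter $\varepsilon > 0$ is added to the weight to ensure strict positivity, and the conclusion is obtained by taking expectations and using that the bound $M$ is uniform in $\varepsilon$.

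For the first estimate, fix $\varepsilon > 0$, set $V_t := X_t - X_0$, and consider the weight $\phi_t := \varepsilon + (\E_t \wei{X}_{t,\tau})^{1/2}$, which is strictly positive and admits an RCLL version because $t \mapsto \E_t \wei{X}_\tau$ is a martingale and $\wei{X}$ is continuous. For stopping times $S \le T \le \tau$, continuity of $X$ gives $V_{S-} = V_S$, and the conditional It\^o isometry together with Jensen's inequality yields
\[
\E_S|V_T - V_{S-}| = \E_S|X_T - X_S| \le (\E_S\wei{X}_{S,T})^{1/2} \le \phi_S,
\]
so $V \in \bmo_\phi$ with $[V]_{\bmo_\phi} \le 1$. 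In particular $\rho^\phi_{u,v}(V) \le 1$ for every $0 \le u \le v \le \tau$, and therefore $\kappa^\phi_{0,\tau}(V) \le 1$.

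Inspecting the proof of \cref{thm.bmophi} reveals that when $\rho^\phi \le 1$ throughout $[0,\tau]$ one may take $h_0 = \tau$ and $\lambda := e^{-3}$, so that a \emph{single} application of the Taylor expansion step already delivers $\|\E_0 \exp(\lambda \sup_{t \le \tau} |V_t|/\phi^*_{0,t})\|_\infty \le M$, with no iteration over subintervals required. Taking the ordinary expectation and noting that $\phi^*_{0,t} = \varepsilon + \sup_{s\in[0,t]}(\E_s \wei{X}_{s,\tau})^{1/2}$ yields the first exponential estimate, uniformly over $\varepsilon > 0$.

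For the second estimate, take $V_t := \wei{X}_t$ and $\phi_t := \varepsilon + \E_t|X^*_{t,\tau}|^2$. Continuity and monotonicity of $\wei{X}$ give $V_{S-} = V_S$ and
\[
\E_S|V_T - V_{S-}| = \E_S(\wei{X}_T - \wei{X}_S) = \E_S|X_T - X_S|^2 \le \E_S|X^*_{S,\tau}|^2 \le \phi_S,
\]
so $[V]_{\bmo_\phi} \le 1$ again, and the same argument closes the proof with the same universal $\lambda$. The verification of the BMO property is straightforward from It\^o's isometry, so the main (and only) subtle point is to observe that $\rho^\phi$ is bounded by $1$ uniformly, which is precisely what permits taking $h_0 = \tau$ and thereby obtaining the constant $M$ rather than an iterated power $M^n$.
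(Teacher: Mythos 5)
Your proof is correct and is essentially the argument the paper intends (the theorem is stated there without proof as the continuous analogue of \cref{thm.dismart}): the conditional It\^o isometry exhibits the BMO weights $(\E_t\wei{X}_{t,\tau})^{1/2}$ and $\E_t|X^*_{t,\tau}|^2$ with $[V]_{\bmo_\phi}\le 1$, and since then $\rho^\phi_{0,\tau}(V)\le 1$, a single Taylor-expansion step via \eqref{est.jnweight2} yields the bound $M$ with $\lambda=e^{-3}$, exactly as \eqref{est.X3} is obtained from \eqref{est.X2} in \cref{cor.maximaldagger}. The only point left implicit (which the paper also glosses over) is the existence of a measurable RCLL version of $t\mapsto\E_t|X^*_{t,\tau}|^2$ for the second weight, analogous to the discussion of $\E_tX^\sharp_{t,\tau}$ after \cref{cor.maximaldagger}.
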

	Estimates \eqref{bdg.f10}-\eqref{bdg.fweisup0} for the range $m\in(0,1)$ and the exponential integrability appear to be new, as far as the author's knowledge. Estimates with the conditional square functions can be found in \cite{MR0448538,MR365692,MR365692,MR1127721}. 
	Inequality \eqref{bdg.rosenthal} is also known as Burkholder--Rosenthal inequality which is valid for $m\in[1,\infty)$ (\cite{MR365692}). Although we did not recover the case $m=1$ from John--Nirenberg inequality, we have obtained its weak form, namely \eqref{bdg.f10}, which is valid for all $m\in(0,\infty)$.
	The exponential estimates in \cref{thm.dismart,thm.contmart,cor.maximaldagger} are complementary to known estimates for self-normalized processes in \cite[Chapter 12]{MR2488094}.


\section{Weighted VMO processes} 
	\label{sec:weighted_vmo_spaces}
	
	\khoa{apply to stopped BM and continuous martingales.}	
	\begin{definition}
		Let $(V_t)_{t\in[0,\tau]}$ be a process in $\bmo_\phi$, $p\in[1,\infty)$ and $\alpha\in(0,1]$. 
		\begin{enumerate}[(i)]
			\item $V$ is $\vmo_\phi$ if $\kappa^\phi_{0,\tau}(V)=0$.
			\item $V$ is $\vmo_\phi^{p-\var}$ if $V$ is $\vmo_\phi$ and $\rho^\phi(V)$ has finite $p$-variation. 
			\item $V$ is $\vmo_\phi^{\alpha}$ if $V$ is $\vmo_\phi$ and $\rho^\phi(V)$ is $\alpha$-H\"older continuous on the diagonal.
		\end{enumerate}
		We define 
		\begin{align}\label{def.norm.var}
			[V]_{\vmo^{p-\var}_\phi;[0,\tau]}:=\left( \sup_{\pi\in\cpp([0,\tau])}\sum_{[s,t]\in \pi}|\rho^\phi_{s,t}(V)|^{p}\right)^{1/p}
		\end{align}
		and
		\begin{align}\label{def.norm.holder}
			[V]_{\vmo^\alpha_\phi;[0,\tau]}:=\sup_{0\le s<t\le \tau}\frac{\rho^\phi_{s,t}(V)}{(t-s)^\alpha},
		\end{align}
		where $\cpp([0,\tau])$ is the set of all partitions on $[0,\tau]$.
	\end{definition}
	It is evident that $\vmo^\alpha_\phi\subset \vmo^{1/\alpha-\var}_\phi$ and $[V]_{\vmo_\phi^{1/\alpha}-\var;[0,\tau]}\le \tau^\alpha[V]_{\vmo_\phi^\alpha;[0,\tau]}$.  Reasoning as in \cite{le2022stochastic}, processes in $\vmo_\phi$ are necessarily continuous. 
	\begin{proposition}
		Let $(V_t)_{t\in[0,\tau]}$ be a process in $\vmo^{p-\var}_\phi$ and define the function
		\begin{align*}
			(s,t)\mapsto w^\phi_{s,t}(V):=([V]_{\vmo^{p-\var}_\phi;[s,t]})^p.
		\end{align*}
		Then $w(V):\{(s,t)\in[0,\tau]^2:s\le t\}\to[0,\infty)$ is a control.
	\end{proposition}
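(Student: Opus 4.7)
Write $w := w^\phi(V)$. The plan is to verify the three defining properties of a control function: $w^\phi_{t,t}(V) = 0$, superadditivity, and continuity on $\{(s,t): 0 \le s \le t \le \tau\}$. The first is immediate from the continuity of $V$ (noted just before the proposition), which gives $\rho^\phi_{t,t}(V) = 0$ and hence $w^\phi_{t,t}(V) = 0$. Superadditivity follows by concatenation: for $\pi_1 \in \cpp([s,u])$ and $\pi_2 \in \cpp([u,t])$, the union $\pi_1 \cup \pi_2 \in \cpp([s,t])$ gives $\sum_{\pi_1} |\rho^\phi|^p + \sum_{\pi_2}|\rho^\phi|^p \le w^\phi_{s,t}(V)$, and taking suprema yields $w^\phi_{s,u}(V) + w^\phi_{u,t}(V) \le w^\phi_{s,t}(V)$. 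A consequence is the monotonicity of $w$ (non-decreasing in $t$, non-increasing in $s$).

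The substantive step is continuity. First I would establish diagonal continuity: $\sup\{w^\phi_{u,v}(V) : 0 \le u \le v \le \tau,\, v - u \le h\} \to 0$ as $h \downarrow 0$. If this failed there would be $\epsilon > 0$ and intervals $[u_n, v_n] \subset [0,\tau]$ with $v_n - u_n \to 0$ and $w^\phi_{u_n, v_n}(V) > \epsilon$; after extracting a sub-sequence converging in $[0,\tau]$ and thinning further (possible since the lengths vanish), I may arrange the $[u_n, v_n]$ to be pairwise disjoint. Choosing partitions $\pi_n$ of $[u_n, v_n]$ with $\sum_{\pi_n}|\rho^\phi|^p > \epsilon/2$, the union $\bigcup_n \pi_n$ is a sub-partition of $[0,\tau]$ whose $p$-variation sum for $\rho^\phi$ exceeds $\sum_n \epsilon/2 = \infty$, contradicting $V \in \vmo^{p-\var}_\phi$.

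Joint continuity at a generic $(s,t)$ follows by a sandwich argument. For $(s_n, t_n) \to (s,t)$ with $s < t$, monotonicity gives $w^\phi_{s \vee s_n,\, t \wedge t_n}(V) \le w^\phi_{s_n, t_n}(V) \le w^\phi_{s \wedge s_n,\, t \vee t_n}(V)$ for $n$ large enough. The shrinking side converges to $w^\phi_{s,t}(V)$ directly from superadditivity and diagonal continuity, via
$0 \le w^\phi_{s,t}(V) - w^\phi_{s \vee s_n,\, t \wedge t_n}(V) \le w^\phi_{s,\, s \vee s_n}(V) + w^\phi_{t \wedge t_n,\, t}(V) \to 0$.

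The expanding side is the main obstacle. To handle it I would first prove subadditivity of $\rho^\phi$, namely $\rho^\phi_{a,b}(V) \le \rho^\phi_{a,c}(V) + \rho^\phi_{c,b}(V)$ for $a \le c \le b$, by decomposing the supremum over stopping times $a \le S \le T \le b$ into the cases $T \le c$, $S \ge c$, and $S < c < T$; the last case uses $|V_T - V_{S-}| \le |V_T - V_c| + |V_c - V_{S-}|$ together with $\phi^*_{S,T} \ge \phi^*_{S,c}, \phi^*_{c,T}$ and the tower property. Then for a near-optimal partition $\pi$ of $[s \wedge s_n,\, t \vee t_n]$, refining by the points $s$ and $t$ produces at most two intervals straddling $s$ or $t$; their contribution is controlled via the elementary inequality $(x+y)^p - x^p \le p(x+y)^{p-1} y$ combined with the $\rho^\phi$-triangle inequality, yielding an error that vanishes because $\kappa^\phi_{0,\tau}(V) = 0$ forces $\rho^\phi$ on shrinking intervals to tend to zero. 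This gives $\limsup_n w^\phi_{s \wedge s_n,\, t \vee t_n}(V) \le w^\phi_{s,t}(V)$ and completes the proof.
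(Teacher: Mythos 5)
Your overall architecture is sound and is essentially the paper's: the paper's own proof consists only of the triangle inequality $\rho^\phi_{s,t}(V)\le\rho^\phi_{s,u}(V)+\rho^\phi_{u,t}(V)$ (which you also establish, via a stopping-time case decomposition rather than the paper's deterministic-time display) and then defers superadditivity and continuity to the companion paper. Your vanishing on the diagonal, superadditivity by concatenation of partitions, and the sandwich reduction of joint continuity to one-sided diagonal continuity are all correct.

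The genuine gap is in the diagonal-continuity step. You claim that from intervals $[u_n,v_n]$ with $v_n-u_n\to0$ and $w^\phi_{u_n,v_n}(V)>\epsilon$ one can, after thinning, extract a pairwise disjoint subfamily ``since the lengths vanish.'' This fails in precisely the dangerous case: after passing to a subsequence the intervals accumulate at a single point $x$, and if they all contain $x$ (e.g.\ all of the form $[x-\delta_n,x]$) then any two of them intersect, so no disjoint subfamily exists; your argument therefore does not exclude the scenario in which the $p$-variation mass concentrates at one point. What your sandwich actually requires is the one-sided statement $\lim_{\delta\downarrow0}w^\phi_{x-\delta,x}(V)=0$ (and its mirror image), and this needs the exhaustion argument of the companion paper rather than a disjointness count: with $L:=\lim_{\delta\downarrow0}w^\phi_{x-\delta,x}(V)$ (which exists by monotonicity), pick a partition $\pi$ of $[x-\delta,x]$ with $\sum_\pi|\rho^\phi|^p\ge w^\phi_{x-\delta,x}(V)-\epsilon$ and, using the subadditivity of $\rho^\phi$, the bound $(a+b)^p\le a^p+p(a+b)^{p-1}b$ and the hypothesis $\kappa^\phi_{0,\tau}(V)=0$, modify it at cost $O(\eta)$ so that its last interval $[a,x]$ has $\rho^\phi_{a,x}(V)\le\eta$; then $w^\phi_{x-\delta,x}(V)\le w^\phi_{x-\delta,a}(V)+\eta^p+\epsilon+O(\eta)$, while superadditivity together with $w^\phi_{a,x}(V)\ge L$ gives $w^\phi_{x-\delta,a}(V)\le w^\phi_{x-\delta,x}(V)-L$, whence $L\le\eta^p+\epsilon+O(\eta)$ and so $L=0$. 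Note that this step uses $\kappa^\phi_{0,\tau}(V)=0$ essentially, whereas your disjointness argument never invokes it. Once this is repaired, your treatment of the expanding side goes through, provided you also note that the sub-sums of a near-optimal partition over $[s\wedge s_n,s]$ and $[t,t\vee t_n]$ are bounded by $w^\phi_{s\wedge s_n,s}(V)$ and $w^\phi_{t,t\vee t_n}(V)$, which vanish by the same one-sided continuity.
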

	\begin{proof}
		For every $s\le u\le t$, we have
		\begin{align*}
			\| \E_s(\phi^*_{s,t})^{-1}|V_t-V_s|\|_\infty \le\|\E_s(\phi^*_{s,u})^{-1} |V_u-V_s|\|_\infty+\|\E_u (\phi^*_{u,t})^{-1}|V_t-V_u|\|_\infty
		\end{align*}
		so that $\rho^\phi_{s,t}(V)\le \rho^\phi_{s,u}(V)+\rho^\phi_{u,t}(V)$.
		The rest of the proof proceeds in the same way as in \cite[Section 3]{le2022stochastic}.
	\end{proof}
	\begin{theorem}
		Let $(V_t)_{t\in[0,\tau]}$ be $\vmo_\phi$. Then
		\begin{align}\label{est.jnvmophi}
			\sup_{r\in[0,\tau]}\left\|\E_r \exp\left(\lambda\sup_{t\in[r,\tau]}\frac{|V_t-V_r|}{\phi^*_{r,t}}\right)\right\|_\infty<\infty \text{ for every }\lambda>0.
		\end{align}
		If $V$ is in $\vmo^{p-\var}_\phi$ for some $p\in(1,\infty)$ then there are constants $c_p,C_p$ such that
		\begin{gather}
			\label{est.phi1}
			\sup_{r\in[0,\tau]}\left\|\E_r\exp\left(\lambda \sup_{t\in[r,\tau]}\frac{|V_t-V_r|}{\phi^*_{r,t}} \right)\right\|_\infty\le M^{1+(e^3\lambda)^pw^\phi_{0,\tau}(V)} \text{ for every }  \lambda>0,
			\\
			\E\exp\left(\lambda \sup_{t\in[0,\tau]}\frac{|V_t-V_0|^{p'}}{|\phi^*_{0,t}|^{p'}} \right)<\infty \text{ whenever }  \lambda(w^\phi_{0,\tau}(V))^{\frac1{p-1}}<c_p,
			\label{est.phi2super}
			\\\left\|\E_s\sup_{t\in[r,\tau]}\frac{|V_t-V_r|^m}{|\phi^*_{r,t}|^m}\right\|_\infty\le C_p \Gamma( m/{p'}+1)(w^\phi_{s,t}(V))^{m/p} \text{ for every }m\ge1.
		\end{gather}
		In \eqref{est.phi2super}, we have set $p'=p/(p-1)$.
		If $V$ is in $\vmo^{1-\var}_\phi$ then \eqref{est.phi1} still holds with $p=1$ and
		\begin{align}
			\PP(|V_t-V_s|\le e^3 \phi^*_{s,t} w_{s,t}^\phi(V)\text{ for all } s\le t\le \tau)=1.
		\end{align}
	\end{theorem}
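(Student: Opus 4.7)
The plan is to build on the one-step exponential bound already established inside the proof of \cref{thm.bmophi}: \emph{if $\lambda\rho^\phi_{s,t}(V)\le e^{-3}$, then $\|\E_s\exp(\lambda\sup_{u\in[s,t]}|V_u-V_s|/\phi^*_{s,u})\|_\infty\le M$.} Claim \eqref{est.jnvmophi} is essentially free from this: $V\in\vmo_\phi$ means $\kappa^\phi_{r,\tau}(V)=0$, so the condition $\lambda\le(e^3\kappa^\phi_{r,\tau}(V))^{-1}$ in \eqref{est.jnexpwei} is vacuous and the claim is immediate.

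For \eqref{est.phi1}, I would make the partition step quantitative. For any $\lambda>0$, use subadditivity of $\rho^\phi$ together with $(\rho^\phi_{s,t})^p\le w^\phi_{s,t}(V)$ (which follows from the definition of $w^\phi$ as the $p$-variation control) to produce $r=t_0<\cdots<t_n=\tau$ with $w^\phi_{t_{k-1},t_k}(V)\le(e^3\lambda)^{-p}$, and hence $\rho^\phi_{t_{k-1},t_k}(V)\le(e^3\lambda)^{-1}$, on each piece. A greedy construction combined with superadditivity of $w^\phi$ forces $n\le 1+(e^3\lambda)^p w^\phi_{r,\tau}(V)$. Iterating the one-step bound over this partition, exactly as in the proof of \eqref{est.jnexpwei}, yields $\|\E_r\exp(\lambda\sup_{t\in[r,\tau]}|V_t-V_r|/\phi^*_{r,t})\|_\infty\le M^n$, which is \eqref{est.phi1}. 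The same argument with $p=1$ gives the $p=1$ version.

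For the moment estimate and \eqref{est.phi2super}, I would pass through tail bounds. Set $X=\sup_{t\in[r,\tau]}|V_t-V_r|/\phi^*_{r,t}$ and $w=w^\phi_{r,\tau}(V)$. Markov's inequality applied to \eqref{est.phi1} gives $\PP_s(X>K)\le M\exp(\log M\cdot(e^3\lambda)^p w-\lambda K)$ for every $\lambda>0$. Minimizing the exponent in $\lambda$ (minimizer proportional to $(K/w)^{1/(p-1)}$) produces a stretched-exponential tail $\PP_s(X>K)\le C_p\exp(-c_p K^{p'}/w^{1/(p-1)})$. A layer-cake integration with substitution $u=c_p K^{p'}/w^{1/(p-1)}$ and the identity $p'(p-1)=p$ then yields $\E_s X^m\le C_p\Gamma(m/p'+1)w^{m/p}$, which is the third claim. \eqref{est.phi2super} now follows by Taylor expansion: $\E\exp(\lambda X^{p'})=\sum_n\lambda^n\E X^{np'}/n!\le C_p\sum_n(\lambda w^{1/(p-1)})^n$, which converges by the ratio test whenever $\lambda w^{1/(p-1)}$ is below an explicit $p$-dependent constant $c_p$.

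For the almost-sure bound in the $p=1$ case, I would apply Markov to \eqref{est.phi1} with $p=1$ and send $\lambda\to\infty$: for any $K>e^3 w^\phi_{s,t}(V)\log M$, the bound $M\exp(\lambda(e^3 w\log M-K))\to0$ forces $\PP(|V_t-V_s|/\phi^*_{s,t}>K)=0$ for each fixed $(s,t)$. A countable union over rational $(s,t)$, combined with continuity of $V$ (guaranteed by $V\in\vmo_\phi$) and of the maps $(s,t)\mapsto\phi^*_{s,t}$ and $(s,t)\mapsto w^\phi_{s,t}(V)$, promotes this into the uniform a.s. statement (with the sharp constant $e^3\log M$ in place of $e^3$; the numerical factor $\log M$ is absorbed into the stated constant). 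The main obstacle in the whole plan is the constant bookkeeping that turns the Markov optimization into exactly $\Gamma(m/p'+1)\,w^{m/p}$; once that is in hand, the partition construction, the tower iteration, and the countable-union promotion are routine.
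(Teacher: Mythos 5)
Your proposal is correct and, for the two claims the paper actually proves in detail, it is the same argument: \eqref{est.jnvmophi} read off from \eqref{est.jnexpwei} because $\kappa^\phi=0$ makes the restriction on $\lambda$ vacuous, and \eqref{est.phi1} via the greedy partition with $w^\phi_{t_{k-1},t_k}(V)=(e^3\lambda)^{-p}$ on all but the last piece, superadditivity of the control to bound $n$, and iteration of the one-step bound from the proof of \cref{thm.bmophi}. The remaining estimates are simply deferred by the paper to \cite[Section 3]{le2022stochastic}, so your optimized-Chernoff/layer-cake/Taylor-series derivations (which are the standard ones and whose exponent bookkeeping via $p'(p-1)=p$ checks out) are a legitimate filling-in rather than a deviation. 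The only point to tighten is the almost-sure bound in the $1$-variation case: sending $\lambda\to\infty$ in Markov's inequality yields the constant $e^3\log M$, and asserting that this is ``absorbed into'' the stated constant $e^3$ requires verifying $M\le e$ (which does hold numerically for $M$ as defined in \eqref{def.M}, but should be checked rather than waved through); you should also invoke continuity of $t\mapsto w^\phi_{t_{k-1},t}(V)$ already in the partition step, since that is what guarantees the first $n-1$ increments of the control are exactly $(e^3\lambda)^{-p}$.
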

	\begin{proof}
		\eqref{est.jnvmophi} is a direct consequence of \eqref{est.jnexpwei}. We only show \eqref{est.phi1} because the other estimates are derived as in \cite[Section 3]{le2022stochastic}.
		We define $t_0=0$ and for each integer $k\ge1$, 
		\[
			t_k=\sup\{t\in[t_{k-1},\tau]:\lambda |w^\phi_{t_{k-1},t}(V)|^{1/p}\le e^{-3}\}.
		\]
		We have $\lambda \kappa^\phi_{t_{k-1},t_k}(V)\le \lambda \rho^\phi_{t_{k-1},t_k}(V)\le \lambda |w^\phi_{t_{k-1},t_k}(V)|^{1/p}\le e^{-3}$. The argument in the proof of \cref{thm.bmophi} shows that the left-hand side of \eqref{est.phi1} is at most $M^n$ with $M$ defined in \eqref{def.M}. 
		By continuity of $w^\phi$, we have $\lambda^p w^\phi_{t_{k-1},t_k}=e^{-3p}$ for $k=1,\ldots,n-1$ and $\lambda^p w^\phi_{t_{n-1},t_n}\le e^{-3p}$. By definition of controls, we have
		\begin{align*}
			\frac{n-1}{(e^{3}\lambda)^p}\le\sum_{k=1}^n w^\phi_{t_{k-1},t_k}(V)\le w^\phi_{0,\tau}(V),
		\end{align*}
		which implies that $n\le 1+(e^{3}\lambda)^p w^\phi_{0,\tau}(V)$. 
	\end{proof}
	\begin{corollary}
		Let $(V_t)_{t\in[0,\tau]}$ be a process in $\bmo_\phi^{p-\var}$ with $p\in(1,\infty)$. Assume that there is $q\in(1,\infty)$ such that
		\begin{align*}
			\E \exp\left({\lambda|\phi_\tau|^{\frac1{q-1}}}\right)<\infty \text{ for every }\lambda>0.
		\end{align*}
		Then
		\begin{align*}
			\E\exp\left(\lambda\sup_{t\in[0,\tau]}|V_t-V_0|^{\frac{p'}{q}}\right)<\infty \text{ for every }\lambda>0.
		\end{align*}
	\end{corollary}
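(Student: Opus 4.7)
The plan is to derive the corollary from \eqref{est.phi2super} via Young's and H\"older's inequalities, interpreting the hypothesis $V\in\bmo_\phi^{p-\var}$ as $V\in\vmo_\phi^{p-\var}$ so that \eqref{est.phi2super} applies. Set $R:=\sup_{t\in[0,\tau]}|V_t-V_0|^{p'}/|\phi^*_{0,t}|^{p'}$; by \eqref{est.phi2super} there is $\lambda_0>0$ depending only on $p$ and $w^\phi_{0,\tau}(V)$ with $\E\exp(\lambda_0 R)<\infty$. Since $\phi^*_{0,t}$ is non-decreasing in $t$, one has the pointwise bound
\[
\sup_{t\in[0,\tau]}|V_t-V_0|^{p'/q}\le R^{1/q}\,(\phi^*_{0,\tau})^{p'/q},
\]
which reduces the claim to controlling the exponential moment of the right-hand side.

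I would then apply Young's inequality with conjugate exponents $(q,q')$ and a free parameter $\epsilon>0$,
\[
R^{1/q}(\phi^*_{0,\tau})^{p'/q}\le \frac{\epsilon^q}{q}\,R+\frac{\epsilon^{-q'}}{q'}\,(\phi^*_{0,\tau})^{p'/(q-1)},
\]
exponentiate both sides, and split the resulting expectation by Cauchy--Schwarz into two factors. Choosing $\epsilon$ small enough so that $2\lambda\epsilon^q/q\le\lambda_0$ forces the first factor $\E\exp(c\epsilon^q R)$ to be finite by \eqref{est.phi2super}. The remaining factor $\E\exp(c(\phi^*_{0,\tau})^{p'/(q-1)})$ is to be absorbed by the hypothesis on $\phi_\tau$, invoking $\phi^*_{0,\tau}\le\phi_\tau$ under monotonicity of $\phi$ (the natural setting for weights arising as $(\E_\cdot\wei X_{\cdot,\tau})^{1/2}$ and the like).

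The main obstacle is precisely the matching of exponents in this last step: the Young splitting produces the power $p'/(q-1)$ on $\phi^*_{0,\tau}$, while the hypothesis only controls $\phi_\tau^{1/(q-1)}$, and raising a random variable with \emph{all} exponential moments to a power $p'>1$ does not preserve this property in general. The exponents coincide only when $p'=1$. Two natural remedies present themselves: either strengthen the hypothesis to $\E\exp(\lambda\phi_\tau^{p'/(q-1)})<\infty$ (likely the intended form, which closes the argument via the Young--H\"older route above), or weaken the conclusion's exponent from $p'/q$ to $1/q$; in the latter case one rebalances the Young step by taking $a=R^{1/(p'q)}$, $b=(\phi^*_{0,\tau})^{1/q}$ with the same conjugate pair $(q,q')$, which delivers $(\phi^*_{0,\tau})^{1/(q-1)}$ as needed, while the first factor $\E\exp(c R^{1/p'})$ remains finite for every $c>0$ because $R^{1/p'}$ inherits all exponential moments from the sub-exponential $R$.
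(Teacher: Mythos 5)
You follow exactly the paper's route: with $X=\sup_{t\le\tau}|V_t-V_0|$ and $Y=\phi^*_{0,\tau}$, split $\lambda X^{p'/q}$ by Young's inequality into a small multiple of $(X/Y)^{p'}$, handled by \eqref{est.phi2super}, plus a power of $Y$ to be absorbed by the hypothesis on $\phi_\tau$, and then separate the two factors by H\"older (your Cauchy--Schwarz is the case $q=2$ of the paper's H\"older step; also, reading $\bmo_\phi^{p-\var}$ as $\vmo_\phi^{p-\var}$ is right, since only the latter is defined). The obstacle you isolate is genuine and is not circumvented by the paper: its displayed inequality
\begin{align*}
\lambda X^{\frac{p'}{q}}\le \frac{\varepsilon^q}{q}\left(\frac{X}{Y}\right)^{p'}+\frac{(\varepsilon^{-1}\lambda)^{q'}}{q'}\,Y^{\frac{q'}{q}}
\end{align*}
is not a valid instance of Young's inequality: factoring $\lambda X^{p'/q}=\varepsilon(X/Y)^{p'/q}\cdot\varepsilon^{-1}\lambda Y^{p'/q}$ and applying $ab\le a^q/q+b^{q'}/q'$ yields $Y^{p'q'/q}=Y^{p'/(q-1)}$ in the second term, not $Y^{q'/q}=Y^{1/(q-1)}$. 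The displayed inequality even fails pointwise: for $p'=q=2$, the choices $X=Y=t$ and $X=t^2$, $Y=t$ force, as $t\to\infty$, the incompatible constraints $2\varepsilon^2\le\lambda$ and $\lambda\le\varepsilon^2/2$.

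So the corollary needs the hypothesis $\E\exp(\lambda|\phi_\tau|^{p'/(q-1)})<\infty$; that is your first remedy, and it closes the proof exactly as you outline. Your second remedy is also sound but suboptimal: rebalancing the Young exponents (conjugates $r=1+p'(q-1)$ and $r'$) gives the conclusion with exponent $p'/(1+p'(q-1))$, which lies between your $1/q$ and the claimed $p'/q$ and coincides with the latter only when $p'=1$. Two smaller points. First, both you and the paper silently use extra structure on $\phi$ to pass from $\phi_\tau$ to $\phi^*_{0,\tau}$: you assume monotonicity, while the paper invokes Doob's inequality, which presumes a submartingale-type property of $\phi$; one of these should be made explicit. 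Second, before invoking \eqref{est.phi2super} you should dispose of the degenerate case $w^\phi_{0,\tau}(V)=0$, in which $V$ is constant and the claim is trivial, as the paper does.
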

	\begin{proof}
		If $w^\phi_{0,\tau}(V)=0$, $V$ is  a constant and the claim is trivial. We thus assume that $w^\phi_{0,\tau}(V)\neq0$.
		Put $X=\sup_{t\in[0,\tau]}|V_t-V_0|$, $Y=\phi^*_{0,\tau}$. By Doob's maximal inequality, we have
		\begin{align*}
			\E e^{\lambda |\phi^*_{0,\tau}|^{1/(q-1)}}\le C \E e^{\lambda \phi_\tau^{1/(q-1)}}<\infty \text{ for every }\lambda>0.
		\end{align*}
		Let $\lambda,\varepsilon$ some positive numbers.
		Using Young inequality
		\begin{align*}
		 	\lambda X^{\frac{p'}q}\le \frac {\varepsilon^q}q\left( \frac{X}{Y}\right)^{p'}+\frac{(\varepsilon^{-1}\lambda)^{q'}}{q'}Y^{\frac {q'}q}
		 \end{align*}
		  and H\"older inequality, we have
		\begin{align*}
			\E e^{\lambda X^{p'/q}}\le \left[\E e^{\varepsilon^q (\frac XY)^{p'}}\right]^{1/q}\left[\E e^{(\varepsilon^{-1}\lambda)^{q'}Y^{q'/q}}\right]^{1/q'}.
		\end{align*}
		Observing that $\frac XY\le \sup_{t\in[0,\tau]}\frac{|V_t-V_0|}{\phi^*_{0,t}}$ and $q'/q=1/(q-1)$, we can choose  $\varepsilon$ sufficiently small and apply \eqref{est.phi2super} to obtain the result.	
	\end{proof}

\bibliographystyle{alpha}
\bibliography{biblio}
\end{document}